\documentclass[a4paper,11pt]{amsart}

\usepackage{cite}
\usepackage{url}

\usepackage{hyperref}

\usepackage{enumerate}

\newtheorem{theorem}{Theorem}[section]

\numberwithin{equation}{section}

%%%%%%%%%%%%%%%%% Jonas abbreviations

\newcommand{\Fcal} {{\mathcal F}}

\newcommand{\Ocal} {{\mathcal O}}

\newcommand{\R}{\mathbb{R}}
\newcommand{\N}{\mathbb{N}}

\renewcommand{\P}{\mathbb{P}}
\newcommand{\E}{\mathbb{E}}

\renewcommand{\epsilon}{\varepsilon}
\newcommand{\eps}{\varepsilon} % {\epsilon}

\renewcommand{\div}{\operatorname{div}}

\usepackage{geometry}

\title{Improved regularity for the stochastic fast diffusion equation}

\author{Ioana Ciotir}
\address{Normandie University\\
INSA de Rouen Normandie\\
LMI (EA 3226 - FR CNRS 3335)\\
76000 Rouen\\
France}
\email{ioana.ciotir@insa-rouen.fr}
\author{Dan Goreac}
\address{School of Mathematics and Statistics\\
Shandong University\\
Weihai\\
Weihai 264209\\
China
\& LAMA\\
Univ Gustave Eiffel, UPEM\\
Univ Paris Est Creteil, CNRS\\
77447 Marne-la-Vall\'ee\\
France}
\email{dan.goreac@univ-eiffel.fr}
\author{Jonas M. T\"olle}
\address{Aalto University\\
Department of Mathematics and Systems Analysis\\
PO Box 11100 (Otakaari 1, Espoo)\\
00076 Aalto\\
Finland}
\email{jonas.tolle@aalto.fi}

\date{\today}

\begin{document}

\begin{abstract}
We prove that the solution to the singular-degenerate stochastic fast-diffusion equation with parameter $m\in (0,1)$, with zero Dirichlet boundary conditions on a bounded domain in any spatial dimension, and driven by linear multiplicative Wiener noise, exhibits improved regularity in the Sobolev space $W^{1,m+1}_0$ for initial data in $L^{2}$.\end{abstract}

\keywords{Stochastic singular-degenerate diffusion equation; stochastic partial differential equation; stochastic fast diffusion equation; improved Sobolev regularity; linear multiplicative Wiener noise.}

\thanks{IC was partially supported by l'Agence Nationale de la Recherche (ANR), project ANR COSS number ANR-22-CE40-0010. DG has been partially supported by the National Key R and D Program of China (No. 2018YFA0703901) and the NSF of China (Nos. 12031009, 11871037). The research of JMT was partially supported by the seed funding grant UNA Random of the Una Europa alliance.}

\subjclass{35B65, 35K67, 60H15, 76S05}

\maketitle

\section{Introduction}

In this work, we establish higher order regularity of the strong solutions to
the stochastic fast diffusion equation perturbed by linear multiplicative Wiener noise.
The equations are set on a bounded domain $\mathcal{O}\subset\R^d$ with sufficiently smooth boundary, and formulated with zero Dirichlet boundary conditions. Our approach is independent of the space dimension.

The singular-degenerate stochastic fast diffusion equation, $m\in (0,1)$, until the time-horizon $T>0$, is given by
\begin{equation}\label{eq:main}
\left\{
\begin{aligned}
du(t)&=\Delta \left( u^{[m]}(t)\right)\, dt+\sum_{k=1}^\infty g_k u(t)\, d\beta_k(t), &&  t\in (0,T],\quad\text{in}\;\Ocal,  \\ 
u(t)&=0, && t\in (0,T],\quad\text{on}\;\partial\Ocal,   \\ 
u(0) &=u_{0}, && \text{in}\;\Ocal,
\end{aligned}\right.
\end{equation}
where we employ the notation $x^{[m]}:=|x|^{m-1}x$, $x\in\R$, $m\in(0,1)$. The stochastic driving term is given by an independent family of standard one-dimensional Brownian motions $\{\beta_k(t)\}_{t\ge 0}$, $k\in\N$ supported by a filtered probability space $(\Omega,\Fcal,\{\Fcal_t\}_{t\ge 0},\P)$ satisfying the usual assumptions of completeness and right-continuity.
The noise coefficients $g_k$, $k\in\N$, are assumed to satisfy
\begin{equation}\label{eq:coeff}\sum_{k=1}^\infty \|g_k\|^2_{C^1(\overline{\Ocal})}=:C_g<\infty.\end{equation}
Denote $H:=H^{-1}_0(\Ocal)$, that is, the topological dual space of $H_0^1(\Ocal)=W^{1,2}_0(\Ocal)$. Furthermore, denote the $L^2(\Ocal)$-norm by $\|\cdot\|_2$ and the $H^{-1}_0(\Ocal)$-norm by $\|\cdot\|_H$. For $v\in H$, we introduce the following notation for the noise coefficient,
\[B(v)(h)=\sum_{k=1}^\infty g_k v(e_k,h)_H,\quad h\in H,\]
where $e_k\in H$, $k\in\N$ are the elements of an orthonormal basis of $H$. Then
$B:H\to L_2(H,H)$ is Lipschitz continuous, i.e.,
\[\|B(x)-B(y)\|_{L_2(H,H)}^2\le C_g  \|x-y\|_H^2,\quad x,y\in H,\]
see \cite[Section 3]{GR:2015}. Here, $L_2(H,H)$ denotes the space of linear Hilbert-Schmidt operators from $H$ to $H$. We also obtain
\[\|B(x)\|_{L_2(H,L^2(\Ocal))}^2\le \sum_{k=1}^\infty \|g_k\|^2_{C^0(\overline{\Ocal})}\|x\|^2_{L^{2}(\Ocal)},\quad x\in L^{2}(\Ocal).\]
The stochastic fast diffusion equation is closely related to the stochastic porous medium equation, see \cite{BDPR:16} and the references therein. Several properties of the solutions to stochastic fast diffusion equations have been studied, for instance, finite time extinction \cite{BDPR:12-2,G:15}, random attractors \cite{G:13}, invariance of subspaces \cite{Liu:10}, ergodicity and uniqueness of invariant measures \cite{GT:14,LT:11,N:23,BDP:10}, convergence of solutions \cite{CT:12,GT:16}, under general pseudodifferential operators \cite{RRW:07}, and regularity \cite{GR:2015}. 
The limiting case $m=0$ exhibits two particular frameworks, depending on how one interprets the passage to the limit for $m\to 0$. The multivalued case with a step-function nonlinearity is related to models of self-organized criticality and has been first studied in \cite{BR:09,BR:12,B:13,G:15} and is still an active topic of research \cite{BGN:22,N:21}. The logarithmic diffusion case has been studied in \cite{B:12,CFG:23}. The case $m\in (-1,0)$ is treated in \cite{C:17}.

For an initial datum $u_0\in L^2(\Omega,\Fcal_0,\P; H)$ and all spatial dimensions $d\in\N$, it is known that there exists a unique solution $\{u(t)\}_{t\in [0,T]}$ in the sense of stochastic variational inequalities (SVI) \cite[Definition 2.1]{GR:2015} to \eqref{eq:main} in the space $L^2(\Omega;C([0,T],H))$ by \cite[Theorem 2.3]{GR:2015}, which is also a unique generalized strong solution in the sense of \cite[Definition A.1]{GR:2015} by \cite[Theorem 3.1]{GR:2015}. At the same place, for initial data $u_0\in L^{m+1}(\Omega,\Fcal_0,\P;L^{m+1}(\Ocal))\cap L^2(\Omega,\Fcal_0,\P;H)$, the authors obtain that $u$ is in fact a unique pathwise strong solution in the sense of \cite[Definition A.1]{GR:2015}, such that
\begin{equation}\label{eq:Lm1}
u\in C([0,T]; L^{m+1}(\Omega;L^{m+1}(\Ocal))).
\end{equation}
Stronger notions of solutions and non-negativity of solutions are discussed in \cite[Section 3.6]{BDPR:16}, where the authors obtain $u^{[m]}\in L^2([0,T];H^1_0(\Ocal))$ and $\frac{d}{dt}u\in L^2([0,T];H)$ for $d=1,2,3$, where $m\in[\frac{1}{5},1]$ if $d=3$.

Our main result is given as follows.
\begin{theorem}\label{thm:main}
Assume that \eqref{eq:coeff} holds.
Then the unique strong solution $u$ to equation \eqref{eq:main} with initial datum $u_0\in L^{2}(\Omega,\Fcal_0,\P;L^{2}(\Ocal))$ satisfies
\[u\in L^{m+1}(\Omega\times [0,T];W^{1,m+1}_0(\Ocal))\cap L^\infty([0,T];L^2(\Omega;L^2(\Ocal))).\]
\end{theorem}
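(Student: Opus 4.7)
The strategy is to combine an $L^2(\Ocal)$-energy estimate with a Hölder interpolation that converts the natural dissipation $|u|^{m-1}|\nabla u|^2$ produced by the equation into an $L^{m+1}$-bound on the gradient. Concretely, I would apply Itô's formula to $\|u(t)\|_2^2$, use integration by parts to identify the duality pairing as
$$
\langle \Delta u^{[m]}, u\rangle = -m\int_\Ocal |u|^{m-1}|\nabla u|^2\,dx,
$$
and bound the Itô correction by $C_g\|u\|_2^2$ via assumption \eqref{eq:coeff}. Taking expectations and invoking Gronwall's lemma, one obtains simultaneously
$$
\sup_{t\in[0,T]}\E\|u(t)\|_2^2 \le e^{C_g T}\|u_0\|_2^2
\quad\text{and}\quad
\E\int_0^T\!\!\int_\Ocal |u|^{m-1}|\nabla u|^2\,dx\,ds \le C(T)\|u_0\|_2^2,
$$
the first of which is exactly the $L^\infty([0,T];L^2(\Omega;L^2(\Ocal)))$ claim.

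For the $W^{1,m+1}_0$-component, I would use the pointwise identity $|\nabla u|^{m+1}=\bigl(|u|^{m-1}|\nabla u|^2\bigr)^{(m+1)/2}|u|^{(m+1)(1-m)/2}$ and apply Hölder's inequality with conjugate exponents $2/(m+1)$ and $2/(1-m)$ successively in $\Ocal$, in $[0,T]$, and in $\Omega$ to obtain
$$
\E\int_0^T\!\!\int_\Ocal |\nabla u|^{m+1}\,dx\,dt \le \Bigl(\E\int_0^T\!\!\int_\Ocal |u|^{m-1}|\nabla u|^2\,dx\,dt\Bigr)^{(m+1)/2}\Bigl(\E\int_0^T\|u\|_{m+1}^{m+1}\,dt\Bigr)^{(1-m)/2}.
$$
Since $\Ocal$ is bounded and $m+1<2$, the continuous embedding $L^2(\Ocal)\hookrightarrow L^{m+1}(\Ocal)$ together with Jensen's inequality bounds the second factor by a constant multiple of $(\sup_t\E\|u(t)\|_2^2)^{(m+1)/2}$, while the first factor is controlled by the dissipation bound above. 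This yields the $L^{m+1}(\Omega\times [0,T];W^{1,m+1}_0(\Ocal))$-membership.

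The main obstacle is the rigorous justification of the $L^2(\Ocal)$-valued Itô formula: the variational framework of \cite{GR:2015} lives in the pivot space $H=H^{-1}_0(\Ocal)$, in which $\|\cdot\|_2^2$ is not an admissible Itô functional and the nonlinearity $x\mapsto x^{[m]}$ is not Lipschitz. I would circumvent this by working at the regularized level, e.g.\ with the Yosida approximation $\beta_\lambda$ of the monotone graph $x\mapsto x^{[m]}$ already used in \cite{GR:2015}; for the corresponding solution $u_\lambda$ the nonlinearity is Lipschitz, $L^2$-regularity is preserved, and the $L^2$-Itô formula holds classically. Deriving the two estimates above uniformly in $\lambda$ and then passing to the limit $\lambda\downarrow 0$ via weak compactness in $L^{m+1}(\Omega\times[0,T]\times\Ocal)$, lower semicontinuity of the $W^{1,m+1}_0$-seminorm under weak convergence, and the SVI-uniqueness of the limit in $C([0,T];H)$, will transfer the claimed regularity back to the original solution $u$.
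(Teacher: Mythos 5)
Your overall strategy coincides with the paper's: an $L^2(\Ocal)$-It\^o estimate producing a weighted dissipation, followed by H\"older's inequality with exponents $\tfrac{2}{m+1}$ and $\tfrac{2}{1-m}$, an $L^{m+1}$-bound for the second factor, and a passage to the limit by weak compactness and identification with the SVI/strong solution. However, the approximation step as you state it contains a genuine gap. It\^o's formula for $\|u_\lambda\|_2^2$ applied to the Yosida-regularized equation yields the dissipation weighted by $\beta_\lambda'(u_\lambda)$, \emph{not} by $m|u_\lambda|^{m-1}$: since $\beta_\lambda'(r)\le \lambda^{-1}$ is bounded near $r=0$ while $m|r|^{m-1}$ blows up there, the uniform bound on $\E\int\beta_\lambda'(u_\lambda)|\nabla u_\lambda|^2$ does not imply a uniform bound on $\E\int |u_\lambda|^{m-1}|\nabla u_\lambda|^2$, and the latter cannot be recovered in the limit either, because It\^o's formula is unavailable for the singular limit equation. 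So ``deriving the two estimates above uniformly in $\lambda$'' fails as written. The fix is to run the H\"older interpolation at the approximate level with the regularized weight itself and to verify that its inverse has the right growth, e.g.\ for the paper's choice $\phi_\delta(r)=(r^2+\delta)^{\frac{m-1}{2}}r$ one has $(\phi_\delta'(r))^{-\frac{m+1}{1-m}}\le C(m)\bigl(|r|^{m+1}+\delta^{\frac{m+1}{2}}\bigr)$ (an analogous bound holds for $\beta_\lambda'$), so that the second H\"older factor is controlled by the $L^{m+1}$-norm uniformly in the regularization parameter; only then does the $W^{1,m+1}$ bound become uniform and survive the weak limit.

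A second, related gap: ``the $L^2$-It\^o formula holds classically'' for the regularized equation is not automatic. To make sense of $\langle \Delta\beta_\lambda(u_\lambda),u_\lambda\rangle=-\int_\Ocal\beta_\lambda'(u_\lambda)|\nabla u_\lambda|^2\,d\xi$ you need $u_\lambda\in H^1_0(\Ocal)$, and a Lipschitz nonlinearity whose derivative degenerates at infinity (as $\beta_\lambda'(r)\sim m|r|^{m-1}\to 0$ for $|r|\to\infty$) does not provide this by itself. The paper therefore works with a two-parameter approximation, adding a nondegenerate viscosity term $\eps\Delta u$, which guarantees $u_{\eps,\delta}\in L^2(\Omega\times[0,T];H^1_0(\Ocal))$, justifies the chain rule $\nabla\phi_\delta(u)=\phi_\delta'(u)\nabla u$ and the integration by parts, and then requires a two-stage limit $\delta\to 0$ followed by $\eps\to 0$ using the weak convergences from Gess--R\"ockner. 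Your limit identification via weak compactness, lower semicontinuity and SVI uniqueness parallels the paper and is fine; likewise, bounding the second factor via $L^2(\Ocal)\hookrightarrow L^{m+1}(\Ocal)$ and Jensen is a legitimate, slightly more self-contained alternative to the paper's appeal to the $C([0,T];L^{m+1}(\Omega;L^{m+1}(\Ocal)))$ estimate of \cite{GR:2015}.
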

Note that also
\begin{equation*}u\in  C([0,T]; L^{m+1}(\Omega;L^{m+1}(\Ocal)))\cap L^2(\Omega;C([0,T],H))
\end{equation*}by the results of Gess and R\"ockner \cite{GR:2015}.

Our main idea is based on the observation that formally,
\begin{equation}\label{eq:chainrule}\Delta \left( u^{[m]}\right)=\div\left(\nabla \left( u^{[m]}\right)\right)=m\div\left(|u|^{m-1}\nabla u\right),\end{equation}
so the nonlinear drift is of the form $u\mapsto\div(A(u)\nabla u)$, that is, a divergence-form quasi-linear partial differential operator. The structure of the drift operator resembles the quasi-linear operators in \cite{DdMH:15,OW:19}, however, we would like to point out that their result requires strong ellipticity of the nonlinear coefficient $A$, whereas $A(u)=m |u|^{m-1}$ becomes singular for $u=0$ in our case. We will justify the formal chain rule \eqref{eq:chainrule} by a choice of suitable approximations for the nonlinearity $u^{[m]}$.

For the degenerate drift case, there are several strong regularity results for the stochastic porous medium equation, that is \eqref{eq:main} with $m>1$, as in this case one can treat the second order terms occurring in It\^o's formula more directly, see \cite{G:12}. Recently, optimal regularity for the stochastic porous medium equation in one spatial dimension with multiplicative space-time white noise was obtained using the so-called Stroock-Varopoulos inequality by Dareiotis, Gerencs\'er and Gess \cite{DGG:21}, see \cite{G:21,GST:20,HT:23} for further results proving improved regularity for porous media equations. We note that the application of the Stroock-Varopoulos inequality requires $m>1$ and cannot be applied in our case.

Furthermore, we would like to point out that our upper estimate contains a factor $m^{-\frac{m+1}{2}}$, so our argument does not to carry over to the singular multi-valued case $m=0$. Looking closer at our proof below, one observes that for $m=0$, we may obtain an upper bound containing a term $\frac{1}{\delta}\|u\|^3_{L^3(\Ocal)}$, with $\delta\to 0$, so even the improved integrability results from \cite{BR:09} in spatial dimensions $d=1,2,3$ cannot resolve this issue. A result of improved regularity in this limiting case remains an open problem.

\section{Proof of the main result}

Let us introduce a regularization for the nonlinearity $r\mapsto r^{[m]}$, for $\delta\ge 0$, let
\[\phi_\delta(r):=(r^2+\delta)^{\frac{m-1}{2}}r,\]
where $\phi_\delta \in C^1(\R)$ for $\delta>0$ with derivative
\[\phi'_\delta (r)=(r^2+\delta)^{\frac{m-3}{2}}(\delta+mr^2)\ge 0.\]

Now, we need to regularize the original equation \eqref{eq:main} with parameter $\eps>0$, $\delta\ge 0$
\begin{equation}\label{eq:eps}
\left\{
\begin{aligned}
du_{\eps,\delta}(t)&=\Delta \left( \phi_\delta(u_{\eps,\delta}(t))+\eps u_{\eps,\delta}(t)\right)\, dt+B\left(
u_{\eps,\delta}(t)\right)\, dW(t), &&  t\in (0,T],\quad\text{in}\;\Ocal,  \\ 
u_{\eps,\delta}(t)&=0, && t\in (0,T],\quad\text{on}\;\partial\Ocal,   \\ 
u_{\eps,\delta}(0) &=u_0, && \text{in}\;\Ocal,
\end{aligned}\right.
\end{equation}
Here, $\{W(t)\}_{t\ge 0}$ denotes the cylindrical Wiener process in $H$ on $(\Omega,\Fcal,\{\Fcal_t\}_{t\ge 0},\P)$, constructed with respect to the 
$\{\beta_k\}_{k\in\N}$ introduced in the description of the equation \eqref{eq:main} and the orthonormal basis $\{e_k\}_{k\in\N}$ of $H$.

By \cite[Proof of Theorem 3.1]{GR:2015},
see also \cite[Theorem 6.4]{GT:16}, we get that there exist unique solutions $u_{\eps,\delta}$ to \eqref{eq:eps} for any $\eps>0$, $\delta\ge 0$, and we obtain the following weak convergences (weak$^\ast$ convergence, respectively) for a subsequence $\{\delta_n\}_{n\in\N}$, $\lim_{n\to\infty}\delta_n=0$,
\begin{equation}\label{eq:conv}\begin{aligned}&u_{\eps,\delta_n}\rightharpoonup u_\eps&&\text{in $L^2(\Omega\times [0,T];H_0^1(\Ocal))$ as }&& n\to\infty,\\
&u_{\eps,\delta_n}\rightharpoonup^\ast u_\eps&&\text{in $L^2(\Omega; L^\infty([0,T];L^2(\Ocal)))$ as }&& n\to\infty,\\
&\Delta \phi_{\delta_n}(u_{\eps,\delta_n})\rightharpoonup \Delta u_\eps^{[m]}&&\text{in $L^2(\Omega\times [0,T];H)$ as }&& n\to\infty,\\
&u_{\eps}\rightharpoonup u &&\text{in $L^2(\Omega;C([0,T];H))$ as }&&\eps\searrow 0,
\end{aligned}\end{equation}
where $u$ is the unique solution to \eqref{eq:main} in the sense of \cite[Definition A.1]{GR:2015}.
On the other hand, $u_{\eps,\delta}\in L^2(\Omega\times [0,T];H_0^1(\Ocal))$, for any $\eps>0,\delta\ge 0$.

\begin{proof}[Proof of Theorem \ref{thm:main}]
We recall that we have assumed \eqref{eq:coeff} to hold.
Note that by the chain rule for Sobolev functions, as $\phi_\delta\in C^1(\R)$ for $\delta>0$ (composing $\phi_\delta$ with a smooth cut-off function if necessary), we get that for all $v\in H^1_0(\Ocal)$,
\[\nabla(\phi_\delta(v))=\phi'_\delta(v)\nabla v.\]
In the sequel, let us fix $t\in [0,T]$.
By It\^o's formula \cite[Theorem 4.32]{DPZ:14} for the functional
\[(v,t)\mapsto\|v\|^2_2\, e^{-Kt},\quad v\in L^2(\Ocal),\quad t\in [0,T],\]
for some $K\ge 0$, and by integration by parts in $\Ocal$,
we get that
\begin{eqnarray*}
\E\| u_{\eps,\delta}(t)\| _{2}^{2} \, e^{-tK}&= &\E\|u_0\|_2^2
-2\E\int_{0}^{t}\int_{\mathcal{O}}e^{-Ks}(\phi_\delta'(u_{\eps,\delta})+\eps)(\nabla u_{\eps,\delta}\cdot\nabla u_{\eps,\delta})\, d\xi \,ds \\
&&+\sum_{k=1}^\infty\E\int_0^t \int_\Ocal e^{-Ks} |g_k u_{\eps,\delta}|^2\,d\xi\,ds-K\E\int_0^t\int_\Ocal e^{-Ks}|u_{\eps,\delta}|^2\,d\xi\,ds\\
&\leq &\E\|u_0\|_2^2
-2\E\int_{0}^{t}\int_{\mathcal{O}}e^{-Ks}(\phi_\delta'(u_{\eps,\delta})+\eps)|\nabla u_{\eps,\delta}|^2\, d\xi \,ds \\
&&+(C_g-K)\E\int_0^t\int_\Ocal e^{-Ks} |u_{\eps,\delta}|^2\,d\xi\,ds
\end{eqnarray*}
For notation purposes, we have dropped the dependency on the time $s\in [0,t]$ and the space variable $\xi\in\Ocal$ for the functions $u_{\eps,\delta}$ under the integrals. This convention will be kept throughout the arguments below.

Choosing $K= C_g$, we obtain for $\eps,\delta\in (0,1]$,
\begin{equation}\label{eq:apriori}\begin{split}
&\E\| u_{\eps,\delta}(t)\| _{2}^{2} +2\E\int_{0}^{t}\int_{\mathcal{O}}(\phi_\delta'(u_{\eps,\delta})+\eps)|\nabla u_{\eps,\delta}|^2\, d\xi \,ds\\
\leq &\E\|u_0\|_2^2 \, e^{C_g t}.
\end{split}\end{equation}
We shall use this estimate to get the regularity of the
solution as follows.
We first rewrite for $\beta>0$,
\begin{equation*}
\E\int_{0}^{t}\int_{\mathcal{O}}| \nabla u_{\eps,\delta}| ^{m+1}\,d\xi\,
ds=\E\int_{0}^{t}\int_{\mathcal{O}}(\phi'_\delta(u_{\eps,\delta}))^{\beta }| \nabla u_{\eps,\delta}|
^{m+1}(\phi'_\delta(u_{\eps,\delta}))^{-\beta}\,d\xi\, ds,
\end{equation*}%
and use H\"{o}lder's inequality for $p=\dfrac{2}{m+1}$ and $q=\dfrac{2}{%
1-m}$.

We obtain 
\begin{eqnarray*}
&&\E\int_{0}^{t}\int_{\mathcal{O}}(\phi'_\delta(u_{\eps,\delta}))^{\beta }| \nabla u_{\eps,\delta}|
^{m+1}(\phi'_\delta(u_{\eps,\delta}))^{-\beta}\,d\xi\, ds\\
&\le&\left( \E\int_{0}^{t}\int_{\mathcal{O}}\left((\phi'_\delta(u_{\eps,\delta}))^{\beta}| \nabla
u_{\eps,\delta}| ^{m+1}\right) ^{\frac{2}{m+1}}\,d\xi\, ds\right) ^{\frac{m+1}{2}%
}\left( \E\int_{0}^{t}\int_{\mathcal{O}}(\phi'_\delta(u_{\eps,\delta}))^{-\beta \frac{2}{1-m}}\,d\xi\, ds\right)
^{\frac{1-m}{2}} \\
&=&\left( \E\int_{0}^{t}\int_{\mathcal{O}}(\phi'_\delta(u_{\eps,\delta}))^{\frac{2\beta }{m+1}}|
\nabla u_{\eps,\delta}| ^{2}\,d\xi \, ds\right) ^{\frac{m+1}{2}}\left(
\E\int_{0}^{t}\int_{\mathcal{O}}(\phi'_\delta(u_{\eps,\delta}))^{-\frac{2\beta }{1-m}}\,d\xi \,ds\right) ^{\frac{%
1-m}{2}}.
\end{eqnarray*}
If we choose $\beta =\frac{ m+1 }{2}$, we get
from the previous computations that for $\eps,\delta\in (0,1]$,
\begin{equation}\label{eq:finalbound}
\begin{split}
&\E\int_{0}^{t}\int_{\mathcal{O}}| \nabla u_{\eps,\delta}| ^{m+1}\,d\xi\,
ds\\
\leq & \left( \E\int_{0}^{t}\int_{\mathcal{O}}\phi'_\delta(u_{\eps,\delta})| \nabla
u_{\eps,\delta}| ^{2}\,d\xi\, ds\right) ^{\frac{m+1}{2}}\left( \E\int_{0}^{t}\int_{%
\mathcal{O}}(\phi'_\delta (u_{\eps,\delta}))^{-\frac{m+1}{1-m}}\,d\xi \,ds\right) ^{\frac{1-m}{2}}\\
= & \left( \E\int_{0}^{t}\int_{\mathcal{O}}\phi'_\delta(u_{\eps,\delta})| \nabla
u_{\eps,\delta}| ^{2}\,d\xi\, ds\right)^{\frac{m+1}{2}}\\
&\quad\times\left( \E\int_{0}^{t}\int_{%
\mathcal{O}}(u_{\eps,\delta}^2+\delta)^{\frac{(m-3)(m+1)}{2(m-1)}}(\delta+m u_{\eps,\delta}^2)^{-\frac{m+1}{1-m}}\,d\xi \,ds\right) ^{\frac{1-m}{2}}\\
\le & \left( \E\int_{0}^{t}\int_{\mathcal{O}}\phi'_\delta(u_{\eps,\delta})| \nabla
u_{\eps,\delta}| ^{2}\,d\xi\, ds\right)^{\frac{m+1}{2}}\\
&\quad\times\left( \E\int_{0}^{t}\int_{%
\mathcal{O}}(u_{\eps,\delta}^2+\delta)^{\frac{(m-3)(m+1)}{2(m-1)}}(m\delta+m u_{\eps,\delta}^2)^{-\frac{m+1}{1-m}}\,d\xi \,ds\right) ^{\frac{1-m}{2}}\\
\leq & \left( \E\int_{0}^{t}\int_{\mathcal{O}}\phi'_\delta(u_{\eps,\delta})| \nabla
u_{\eps,\delta}| ^{2}\,d\xi\, ds\right)^{\frac{m+1}{2}}\\
&\quad\times C(m)\left(\E\int_{0}^{t}\int_{%
\mathcal{O}}(u_{\eps,\delta}^2+\delta)^{\frac{(m-3)(m+1)+2m+2}{2(m-1)}}\,d\xi \,ds\right)^{\frac{1-m}{2}}\\
\leq & \left( \E\int_{0}^{t}\int_{\mathcal{O}}\phi'_\delta(u_{\eps,\delta})| \nabla
u_{\eps,\delta}| ^{2}\,d\xi\, ds\right)^{\frac{m+1}{2}}\\
&\quad\times C(m)\left(\E\int_{0}^{t}\int_{%
\mathcal{O}}(u_{\eps,\delta}^2+\delta)^{\frac{m+1}{2}}\,d\xi \,ds\right)^{\frac{1-m}{2}}\\
\leq & \left( \E\int_{0}^{t}\int_{\mathcal{O}}\phi'_\delta(u_{\eps,\delta})| \nabla
u_{\eps,\delta}| ^{2}\,d\xi\, ds\right)^{\frac{m+1}{2}}\\
&\quad\times C(m)\left( \E\int_{0}^{t}\int_{%
\mathcal{O}}\left(|u_{\eps,\delta}|^{m+1}+\delta^{\frac{m+1}{2}}\right)\,d\xi \,ds\right)^{\frac{1-m}{2}},
\end{split}
\end{equation}
where $C(m):=m^{-\frac{m+1}{2}}$.
The first factor is bounded by \eqref{eq:apriori} and the second factor is bounded by \eqref{eq:Lm1}, where the bounds do not depend on $\eps,\delta\in (0,1]$, compare with \cite[Theorem 3.1, Lemma 3.3 and the respective proofs]{GR:2015}. By \eqref{eq:apriori} or \eqref{eq:conv}, we know that
\begin{equation}\label{eq:epsbound}
u_{\eps,\delta_n}\in L^{2}(\Omega\times [0,T];H_0^1(\Ocal)),\quad\eps>0,\delta\ge0.
\end{equation}
By \eqref{eq:conv}, and the fact that
\[L^{2}(\Omega\times [0,T];H_0^1(\Ocal))\subset L^{m+1}(\Omega\times [0,T];W^{1,m+1}_0(\Ocal))=:X,\]
we may take the limit $n\to\infty$, and, in virtue of the uniform estimate in $\delta$ from \cite[Theorem 3.1]{GR:2015}, obtain 
that \eqref{eq:finalbound} holds for $u_\eps$, $\eps\in (0,1]$.
Also, as the bounds used above are independent of $\eps\in (0,1]$, so we get that the family $\{u_{\eps}\}_{\eps>0}$ is uniformly bounded in the reflexive Banach space $X$,
where the Sobolev trace is seen to be zero on $\partial\Ocal$ $\P\otimes dt$-a.e. by \eqref{eq:epsbound}.
Thus, we can extract a weakly convergent subsequence $\{u_{\eps_k}\}_{k\in\N}$, $\lim_{k\to\infty}\eps_k=0$ with a weak limit $\tilde{u}\in X$.
By the weak convergence \eqref{eq:conv} $u_{\eps}\rightharpoonup u$ as $\eps\searrow 0$ in the Hilbert space $L^2(\Omega\times [0,T];H)$, we obtain easily by duality arguments that $u=\tilde{u}$ $\P\otimes dt$-a.e. in $W^{1,m+1}_0(\Ocal)$.

The uniform bound of $\{u_{\eps,\delta}\}_{\eps,\delta\in (0,1]}$ in $L^\infty([0,T];L^2(\Omega;L^2(\Ocal)))$ follows from \eqref{eq:apriori}, where we can obtain $u\in L^\infty([0,T];L^2(\Omega;L^2(\Ocal)))$ by similar weak convergence arguments.
\end{proof}

\end{document}